\renewenvironment{proof}[1][Proof]{\noindent\textit{#1. } }{\hfill$\square$}
\newtheoremstyle{theorem}{6pt}{6pt}{\rm}{}{\sffamily}{ }{ }{}
\theoremstyle{theorem}
\newtheorem{theorem}{\sc Theorem}[section]
\newtheoremstyle{lemma}{6pt}{6pt}{\rm}{}{\sffamily}{ }{ }{}
\theoremstyle{lemma}
\newtheorem{lemma}{\sc Lemma}[section]
\newtheoremstyle{example}{6pt}{6pt}{\rm}{}{\sffamily}{ }{ }{}
\theoremstyle{example}
\newtheoremstyle{corollary}{6pt}{6pt}{\rm}{}{\sffamily}{ }{ }{}
\theoremstyle{corollary}
\newtheoremstyle{definition}{6pt}{6pt}{\rm}{}{\sffamily}{ }{ }{}
\theoremstyle{definition}
\newtheorem{definition}[theorem]{\sc Definition}
\newtheoremstyle{remark}{6pt}{6pt}{\rm}{}{\sffamily}{ }{ }{}
\theoremstyle{remark}
\newtheoremstyle{approximation}{6pt}{6pt}{\rm}{}{\sffamily}{ }{ }{}
\theoremstyle{approximation}
\newtheoremstyle{scheme}{6pt}{6pt}{\rm}{}{\sffamily}{ }{ }{}
\theoremstyle{scheme}
\title{On the numerical computation of Killing and conformally Killing vector fields on compact Riemannian manifolds}
\author{
  Gaëlle Brunet \\
  Department of Physics and Mathematics\\
  University of Eastern  Finland\\
  P.O. Box 111, FI-80101 Joensuu,Finland  \\
  \texttt{gaelpa@uef.fi} \\
   \And
 Maryam Samavaki \\
  Department of Physics and Mathematics\\
  University of Eastern  Finland\\
  P.O. Box 111, FI-80101 Joensuu,Finland  \\
  \texttt{maryam.samavaki@uef.fi} \\
  \AND
  Jukka Tuomela \\
  Department of Physics and Mathematics\\
  University of Eastern  Finland\\
  P.O. Box 111, FI-80101 Joensuu,Finland  \\
  \texttt{jukka.tuomela@uef.fi} \\
}
\begin{document}
\maketitle

\begin{abstract}
The defining equations for Killing vector fields and conformal Killing vector fields are overdetermined systems of PDE. This makes it difficult to solve the systems numerically. We propose an approach which reduces the computation to the solution of a symmetric eigenvalue problem. The eigenvalue problem is then solved by finite element techniques. The formulation itself is valid in any dimension and for arbitrary compact Riemannian manifolds. The numerical results which validate the method are given in two dimensional case.
\end{abstract}

\keywords{Killing vector fields, Conformal Killing vector fields, Finite element methods, Riemannian geometry}

\section{Introduction}
\label{sec;introduction}
Killing vector fields, whose flows generate the isometries on Riemannian manifolds, are fundamental in differential geometry. They arise also indirectly in the study of geodesics. One way to approach the problem is to consider the geodesic flow on the cotangent bundle. Then one tries to find some quantities which are invariant by this flow. If such invariants can be found then the flow is "integrable", i.e. it allows a more explicit description. Darboux in his classic \cite[Chapitre II]{darboux} studied extensively this problem. Apparently he did not explicitly define Killing vector fields, but it turns out that finding a certain type of invariant to the geodesic flow is the same as  finding a Killing field on the manifold.

The Killing fields also appear in continuum mechanics because Killing fields are stationary solutions of incompressible Euler and Navier-Stokes equations. This has potentially important consequences when one studies atmospheric models. Let us consider the 2 dimensional Navier-Stokes equations on the sphere. In the absence of external forces the solution tends to some Killing field and not to zero \cite{NSE}. This phenomenon cannot be observed in the standard setting because boundary conditions do not allow the existence of Killing fields. Now the numerical methods for Navier-Stokes equations typically can add some dissipation to stabilize the computations. However, in the case of the sphere this can have the unintended consequence of dissipating the underlying Killing field. Hence the energy content of the solution is not correct and this can in turn have significant effects on the computed solutions. We will explore this aspect more thoroughly in a forthcoming paper. 

Of course in more complete models of atmospheric flows there are more equations than just the horizontal flow described by the Navier-Stokes equations. However, this horizontal flow is anyway always an important component of the whole model, and hence its analysis will be helpful in  understanding the properties of more complicated systems. For more discussion and analysis of atmospheric flows we refer to
\cite{majda}. 

Not all manifolds have Killing fields; the existence of these fields is analyzed for example in \cite{refintro1}. The conclusion is that Killing fields cannot exist if the Ricci tensor is "too negative", and if they exist the space of Killing fields is finite dimensional. Hence as a PDE system the Killing equations are of finite type, i.e. there are only a finite number of free parameters in the general solution. Actually it is rather difficult in practice to determine if the given metric admits any Killing fields. For two dimensional case there is a classical criterion (given below) but higher dimensional cases are still subject to research \cite{Boris}.

Conformal Killing fields is a certain kind of generalization of Killing fields; in other words Killing fields are also conformally Killing, but there may be fields which are conformally Killing but not Killing. Here also the existence of conformally Killing fields depends on the Ricci tensor, but now the Ricci tensor does not have to be "so positive", which allows the existence of more fields. Again as a PDE system the conformal Killing equations are of finite type except in dimension two. In two dimensional case the conformal Killing equations correspond to the Cauchy Riemann equations, so that locally the solution space is infinite dimensional. However, for compact manifolds without boundary the solution space is still finite dimensional.  

Conformal Killing fields (which are not Killing) are in fact quite different from Killing fields as we will observe below in the examples, and the questions where conformal Killing fields arise are apparently of rather different nature than the problems related to Killing fields. As the name suggests, the conformal Killing fields  appear in the studies related to the conformal equivalences of Riemannian manifolds. Also in some questions of relativity theory conformal Killing equations appear \cite{blau}. 

Killing fields and conformal Killing fields can be considered as vector fields or covector fields whichever is more convenient. Below we will consider them as vector fields. The defining condition for these fields has also been generalized for other tensor fields \cite{conformal}. However, below we will only consider vector fields. 

Because Killing equations are of finite type, in principle the whole field is determined by the relevant data at one point. However, numerically it is not obvious how to propagate this initial data in a stable way to the whole manifold to obtain a description of the whole field. In fact we are not aware of any general numerical schemes for computing Killing and conformal Killing fields.  In this article we propose a method to compute the Killing and conformal Killing vector fields by reducing the problems to a symmetric eigenvalue problem. Killing and conformal Killing vectors then appear as the eigenspace corresponding to the zero eigenvalue of an elliptic operator.  Other eigenvalues are positive, and incidentally one  could ask if the fields corresponding to positive eigenvalues have any interesting geometric or physical interpretation.

In the numerical solution of the eigenvalue problem we have used standard finite element techniques, and we have used the program \textsf{FREEFEM++} \cite{Freefem} in our computations. In the case of Klein bottle the identifications of the coordinate domain boundaries are such that we had to program this case with \textsf{C++}. Our formulation gives a well posed problem in any dimension, but below we will give numerical results only in two dimensional case.

The paper is organized as follows. In section 2 we review the necessary background in Riemannian geometry and functional analysis. In section 3 we recall a few relevant properties of the Killing and conformal Killing equations. In section 4 we formulate our problem as an eigenvalue problem and show that the problem is well posed.  In section 5 we give the numerical results in two dimensional case which validate our method.

\section{Preliminaries and notation}
\label{sec;preliminary}

\subsection{Geometry}
\label{sec2.1}
Let us review some basic facts about Riemannian geometry \cite{hebey,petersen}. Let $M$ be a smooth manifold with or without boundary with  Riemannian metric $g$.  In coordinates we write the vector field $u$ as $u=u^k$ or $u=u^k\partial_{x_k}$ if it is convenient to indicate the particular system of coordinates. The Einstein summation convention is used where appropriate.  The  covariant derivative of $u$ is given by
\[
    \nabla u=u^k_{;j}=u^k_{,j}+\Gamma^k_{ij}u^i
\]
The semicolon is used for the covariant derivative and comma for the standard derivative. $\Gamma$ is the Christoffel symbol of the second kind.   The usual operators are then given by the formulas
\begin{align*}
      \mathsf{div}(u)=&\mathsf{tr}(\nabla u)=u^k_{;k}\\
 \mathsf{grad}(f)=&g^{ij}f_{;j}\\
 \Delta f=&\mathsf{div}(\mathsf{grad}(f))=g^{ij}f_{;ij}
\end{align*}
The divergence operator can extended to general tensors by the formula $\mathsf{div}(T)=\mathsf{tr}(\nabla T)$.

The metric $g$ induces an inner product for general tensors. For one forms we can simply write $g(\alfa,\beta)=g^{ij}\alfa_i\beta_j$.
In addition for this we need the inner product for tensors of type $(1,1)$. Let $A$ be of type $(1,1)$ and  let $A^\ast$ be its adjoint, i.e.
\begin{equation*}
g(Au,v)=g(u,A^\ast v)
\end{equation*}
for all vector fields $u$ and $v$. Then the inner product on the fibers can be defined by
\begin{equation}
g(A,B)=\mathsf{tr}(AB^\ast)=A^k_\ell g^{j\ell}B^i_j g_{ik}
      =A^{kj}B_{jk}
\label{sisätulo}
\end{equation}
The curvature tensor is denoted by $R$ and Ricci tensor by $\mathsf{Ri}$. There are several different conventions regarding the indices and signs of these tensors. We will follow the conventions in \cite{petersen}. In coordinates we have
\begin{equation}
            \mathsf{Ri}_{jk}=R^i_{ijk}
 \label{ricci-tensori}
 \end{equation}
In two dimensional case $\mathsf{Ri}=\kappa g$ where $\kappa$ is the Gaussian curvature.

Let $\partial M$ be the boundary of $M$. The divergence theorem is valid on Riemannian manifolds in the following form:
\begin{equation*}
     \int_M \mathsf{div}(u)\omega_M=
     \int_{\partial M} g(u,\nu )\omega_{\partial M}
\end{equation*}
where $\nu$ is the outer unit normal and $\omega_M$ is the volume form induced by the metric (or Riemannian density if $M$ is not  orientable) and $\omega_{\partial M}$ is the corresponding volume form or density on the boundary.

\subsection{PDE}
\label{sec2.2}

Let  $\alfa$ be a multiindex and $|\alfa|=\alfa_1+\dots+\alfa_n$. Then a general linear PDE can be written as
\begin{equation*}
Au=\sum_{|\alfa|\le q}b_\alfa\partial^\alfa u=f
\end{equation*}
where $b_\alfa$ are some known matrices, not necessarily square.
\begin{definition} The principal symbol of the operator $A$ is
\begin{equation*}
\sigma A=\sum_{|\alfa|= q}b_\alfa\xi^\alfa
\end{equation*}
$A$ is \emph{elliptic}, if $\sigma A$ is injective for all $\xi$ real and $\xi\ne 0$.
\label{elliptic}
\end{definition}
Let us from now on suppose that  $\sigma A$ is a square matrix because we will not need the more general case in the sequel. Let us then suppose that our PDE system is defined on some Riemannian manifold $M$. Then $\sigma A$ can be interpreted as a $(1,1)$ tensor, i.e. it defines a map $T_pM\to T_pM$.  The variables $\xi_i$ can then be interpreted as components of a one form. 

The characteristic polynomial of $\sigma A$ is $p_A(\xi)=\det(\sigma A)$. In this case $A$ is elliptic if $p_A\ne 0$ for all $\xi$ real and $\xi\ne 0$. It is clear that the order of $p_A$ must be even if $A$ is elliptic. It is known that for elliptic boundary value problems the number of boundary conditions must be half the order of the characteristic polynomial. When considering elliptic boundary value problems one needs to impose appropriate boundary conditions. The relevant criterion is known as Shapiro-Lopatinskij condition \cite{agranovich}.

\subsection{Functional analysis}
\label{sec2.3}
We will eventually formulate our problem as a spectral problem so let us recall the relevant theorem  which we will need. For details we refer to \cite{daulio3,hebey}.

Let us consider some Riemannian manifold $M$ and let us  define the inner product for vector fields by the formula
\begin{align*}
\langle u,v\rangle=\int_M g(u,v)\omega_M
\end{align*}
This gives the norm $\|u\|_{L^2}=\sqrt{ \langle u,u\rangle}$ and the corresponding space is denoted by $L^2(M)$.
In this way we can define the  Sobolev inner product
\begin{equation*}
  \langle u,v\rangle_{H^1}=\int_M \Big(g(u,v)+ g(\nabla u,\nabla v)\Big)\omega_M
\end{equation*}
where $g(\nabla u,\nabla v)$  is defined by the formula \eqref{sisätulo}. This gives the norm  $\|u\|_{H^1}=\sqrt{ \langle u,u\rangle_{H^1}}$ and the corresponding Sobolev space is denoted by $H^1(M)$.

Let $V$ be a real Hilbert space and let $a\,:\, V \times V\to\mathbb{R}$ be a continuous and symmetric bilinear map. Let $H$ be another Hilbert space such that $V\subset H$ with compact and dense injection. Let us consider the following eigenvalue problem: 
\begin{itemize}
    \item[] find $\lambda $ and $u\ne0$ such that
\[
   a(u,v)=\lambda \langle u,v\rangle_H
\]
for all $v\in V$. 
\end{itemize}
Due to symmetry the eigenvalues are real.
We say that $a$ is \emph{coercive} if there are two constants $\alpha > 0$ and $\mu \in \mathbb{R}$ such that
\begin{equation*}
a(v,v)+\mu \|v\|_H^2 \geq \alpha \|v\|_{V}^2\quad \forall v\in V
\end{equation*}
The result we will need is the following. 
\begin{theorem}\label{LM}
Let $a$ be symmetric, continuous and coercive. Then there are real numbers $\lambda_k$ and elements $u_k\in V$ such that
\begin{equation*}
a(u_k,v)=\lambda_k \langle u_k,v \rangle_H \quad,\ \forall v\in V
\end{equation*}
where $
-\mu< \lambda_1 \leq \lambda_2 \leq\dots $ and $\lambda_k\to \infty$ when $k\to\infty$. Moreover all eigenspaces are finite dimensional and they are orthogonal to each other with respect to the inner product of $H$.
\end{theorem}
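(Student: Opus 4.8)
The plan is to reduce the problem to the classical spectral theorem for compact self-adjoint operators on the Hilbert space $H$. First I would remove the coercivity shift: since $a$ is coercive, the form $\tilde a(u,v)=a(u,v)+\mu\langle u,v\rangle_H$ satisfies $\tilde a(v,v)\ge\alpha\|v\|_V^2$, and by continuity of $a$ together with the continuous embedding $V\subset H$ it is also bounded above by a multiple of $\|\cdot\|_V^2$. Hence $\tilde a$ is an inner product on $V$ whose induced norm is equivalent to $\|\cdot\|_V$, so $(V,\tilde a)$ is a Hilbert space. An eigenpair for $a$ with eigenvalue $\lambda$ is exactly an eigenpair for $\tilde a$ with eigenvalue $\lambda+\mu$, so it suffices to analyze $\tilde a$.

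Next I would construct the solution operator. For each $f\in H$ the map $v\mapsto\langle f,v\rangle_H$ is a continuous linear functional on $(V,\tilde a)$ because $\|v\|_H\le c\|v\|_V$. By the Riesz representation theorem there is a unique $Tf\in V$ with $\tilde a(Tf,v)=\langle f,v\rangle_H$ for all $v\in V$. Taking $v=Tf$ and using coercivity and continuity gives $\|Tf\|_V\le(c/\alpha)\|f\|_H$, so $T:H\to V$ is bounded. Composing $T$ with the compact injection $V\hookrightarrow H$ produces an operator $K:H\to H$ that is \emph{compact}. I would then verify that $K$ is \emph{self-adjoint} and positive: symmetry of $\tilde a$ gives $\langle Kf,h\rangle_H=\tilde a(Tf,Th)=\langle f,Kh\rangle_H$, while $\langle Kf,f\rangle_H=\tilde a(Tf,Tf)\ge\alpha\|Tf\|_V^2\ge0$, with equality forcing $Tf=0$ and hence $\langle f,v\rangle_H=0$ for all $v$ in the dense subspace $V$, i.e.\ $f=0$. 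Thus $K$ is injective.

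Now I can invoke the spectral theorem for compact self-adjoint operators: $K$ admits a nonincreasing sequence of strictly positive eigenvalues $\nu_k\to0$ with finite-dimensional, mutually $H$-orthogonal eigenspaces, and, by injectivity, the corresponding eigenvectors form an orthonormal basis of $H$. For an eigenvector $u_k$ with $Ku_k=\nu_k u_k$ we have $Tu_k=\nu_k u_k$, so $u_k=\nu_k^{-1}Tu_k\in V$, and unwinding the definition of $T$ yields $\tilde a(u_k,v)=\nu_k^{-1}\langle u_k,v\rangle_H$ for all $v\in V$. Setting $\lambda_k=\nu_k^{-1}-\mu$ returns the original relation $a(u_k,v)=\lambda_k\langle u_k,v\rangle_H$. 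The monotonicity $\nu_1\ge\nu_2\ge\cdots$ and $\nu_k\to0$ translate into $\lambda_1\le\lambda_2\le\cdots$ and $\lambda_k\to\infty$, and $\nu_k>0$ gives $\lambda_k>-\mu$; finite dimensionality and $H$-orthogonality of the eigenspaces are inherited directly from $K$.

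I expect the main obstacle to be establishing the compactness and self-adjointness of $K$. Self-adjointness relies essentially on the symmetry of $a$, and compactness is precisely where the compact embedding $V\subset H$ enters — without it $K$ would merely be bounded and the spectrum could fail to be discrete. The remaining bookkeeping (the shift by $\mu$, boundedness of $T$, and the translation of the spectral data back to $a$) is routine.
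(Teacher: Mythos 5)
Your proof is correct: the $\mu$-shift turning $\tilde a(u,v)=a(u,v)+\mu\langle u,v\rangle_H$ into an inner product equivalent to the $V$-norm, the Riesz solution operator $T$, compactness of $K$ via the compact embedding $V\subset H$, self-adjointness and positive definiteness from symmetry and coercivity, and the translation $\lambda_k=\nu_k^{-1}-\mu$ (giving $\lambda_k>-\mu$, monotone, tending to $\infty$, with finite-dimensional mutually $H$-orthogonal eigenspaces) all go through without gaps. For comparison, note that the paper does not prove this theorem at all --- it is recalled from the cited literature (Dautray--Lions; Hebey) --- and your argument is precisely the standard solution-operator reduction to the spectral theorem for compact self-adjoint operators given in those sources, so there is nothing in the paper itself to diverge from.
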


\section{Killing and conformal Killing vector fields}

 Let $u$ be a vector field on $M$ and let us define the following operators:
\begin{align*}
Su=&g^{kj}u^i_{;j}+g^{ij}u^k_{;j}\\
Cu=&Su-\tfrac{2}{n}\,\mathsf{div}(u)g^{ij}
\end{align*}
\begin{definition} A vector field $u$ is a Killing vector field if $Su=0$ and a conformal Killing field if $Cu=0$.
\end{definition}
Let us first summarize some facts about the existence and non-existence of these fields. For more details we refer to \cite{refintro1,petersen,conformal}. 

As a PDE system $Su=0$ is a system of $\tfrac{1}{2}\,n(n+1)$ linear first order equations with $n$ unknown functions. Differentiating all equations we find that one can actually express all second order derivatives  in terms of lower order derivatives:
\begin{equation*}
    u^{i}_{;jk}=-u^{\ell}R^{i}_{\ell kj}
\end{equation*}
where $R$ is the curvature tensor. Using the language of formal theory of PDE \cite{pommaret,seiler} one can say that by prolonging the system once one gets an involutive form which is of finite type. Hence the dimension of the solution space is at most $\tfrac{1}{2}\,n(n+1)$, and  this upper bound is actually attained for $S^n$ and $\mathbb{R}^n$.

Also  $Cu=0$  is a system of $\tfrac{1}{2}\,n(n+1)$ equations with $n$ unknowns, but now the dimension 2 is a special case. When $n=2$ there are actually only 2 independent equations and it is easily seen that the resulting system is elliptic.\footnote{In $\mathbb{R}^2$ with standard metric one obtains Cauchy Riemann equations.} Hence locally the space of conformal Killing fields is infinite dimensional. When $n>2$  one has to prolong twice to see that the  the system is of finite type so that in this case the solution space is finite dimensional even locally. The relevant computations are carried out in \cite[p. 133]{pommaret}. For compact manifolds without boundary the dimension of the solution space is finite even for $n=2$.  The upper bound for the dimension of the solutions space is $\tfrac{1}{2}\,(n+1)(n+2)$ for $n>2$ in all cases and the same bound is valid when $n=2$ for compact manifolds wihtout boundary. Again this upper bound is attained for the standard spheres.

The existence of Killing and conformal Killing fields depends on the curvature. If the Ricci tensor is everywhere negative definite then there can be no Killing and conformal Killing fields on the manifolds without boundary. On the spheres Ricci tensor is always positive definite; hence the conditions for the existence are ''favorable'' and in this sense it is rather ''natural'' that the upper bound for the dimension is attained for the spheres.

Let us indicate one way of checking if there are any Killing fields on the surface for the given metric. Let us introduce the following covectors :
\begin{align*}
\beta &=\tfrac{1}{2}\,d\, g(d\kappa,d\kappa)=\kappa_{;i}g^{ij}\kappa_{;jk}\\
\alpha &= d\Delta \kappa= g^{ij} \kappa_{;ijk}
\end{align*}
where $\kappa$ is the Gaussian curvature. Then there is the following classical criterion \cite{Boris}.
\begin{lemma} 
\label{number_KF}
Let $M$ be a two dimensional Riemannian manifold and let $\kappa$ be the Gaussian curvature.
\begin{enumerate}
\item If $\kappa$ is constant then locally the space of Killing fields is three dimensional
\item If $\kappa$ is not constant and $d\kappa \otimes \beta$  and $d\kappa \otimes \alpha$ are symmetric   then locally the space of Killing fields is one dimensional
\item Otherwise there are no Killing fields.
\end{enumerate}
\end{lemma}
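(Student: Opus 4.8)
The plan is to turn the overdetermined system $Su=0$ into a closed first-order system on an auxiliary bundle, read off its integrability conditions, and then carry out a pointwise linear-algebra count of the surviving free parameters.

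First I would rewrite $Su=0$ in the equivalent form that the lowered covariant derivative is antisymmetric, $u_{i;j}+u_{j;i}=0$. The pointwise data of a Killing field is then the pair $(u^i,A^i_j)$ with $A^i_j=u^i_{;j}$ and $A$ antisymmetric; in dimension two this is $2+1$ parameters, matching the upper bound $\tfrac12 n(n+1)=3$ quoted above. The prolongation identity $u^i_{;jk}=-u^\ell R^i_{\ell kj}$ already recorded says precisely that $\nabla A$ is an algebraic curvature expression in $u$, so $(u,A)$ satisfies the closed system $\nabla u=A$, $\nabla A=\mathcal R(u)$. A Killing field with prescribed value $(u(p),A(p))$ therefore exists near $p$ exactly when this system is integrable, and the dimension of the solution space equals the dimension of the admissible initial data.

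Next I would extract the integrability conditions. The efficient route is the classical fact that the flow of a Killing field consists of isometries and hence preserves every scalar invariant of $g$. Applying this to $\kappa$, to $g(d\kappa,d\kappa)$, and to $\Delta\kappa$ yields $u(\kappa)=0$, $u\big(g(d\kappa,d\kappa)\big)=0$ and $u(\Delta\kappa)=0$, that is
\[
d\kappa(u)=0,\qquad \beta(u)=0,\qquad \alpha(u)=0 .
\]
These are necessary; I would then identify them with the successive integrability conditions produced by differentiating $u^i_{;jk}=-u^\ell R^i_{\ell kj}$ once more and antisymmetrising, using $\mathsf{Ri}=\kappa g$, equivalently $R^i_{\ell kj}=\kappa(\delta^i_k g_{\ell j}-\delta^i_j g_{\ell k})$, so that the genuinely $\kappa$-quadratic terms cancel and only derivatives of $\kappa$ survive. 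The key linear-algebra observation is that $d\kappa\otimes\beta$ is symmetric iff $\kappa_{;i}\beta_j=\kappa_{;j}\beta_i$, i.e. iff $\beta$ is pointwise proportional to $d\kappa$ wherever $d\kappa\neq0$, and similarly for $\alpha$.

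Finally I would run the trichotomy as a rank count of the constraints $d\kappa(u)=\beta(u)=\alpha(u)=0$ on the two-dimensional fibre. If $\kappa$ is constant then $d\kappa=\beta=\alpha=0$, no constraint is imposed, the system is that of a space form, and the full three-parameter family survives, giving (1). If $\kappa$ is non-constant I restrict to a neighbourhood of a point with $d\kappa\neq0$; there $d\kappa(u)=0$ already cuts the fibre down to a line. When both symmetry conditions hold, $\beta$ and $\alpha$ are proportional to $d\kappa$, so $\beta(u)=0$ and $\alpha(u)=0$ are automatic and the surviving line field integrates to a one-parameter family, giving (2). If one symmetry condition fails, then say $\beta$ is not proportional to $d\kappa$, the constraints $d\kappa(u)=0$ and $\beta(u)=0$ are independent, and they force $u=0$ at the point and hence $u\equiv0$; this is (3). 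The step I expect to be the main obstacle is not the necessity of the three conditions, which is immediate from invariance, but their \emph{sufficiency}: verifying that the prolongation really closes at this order with no further independent condition, and that in case (2) the pointwise line distribution integrates to an honest one-dimensional space of Killing fields on a neighbourhood. Establishing this existence statement is where the finite-type/involutivity structure already invoked in the text must be used with care.
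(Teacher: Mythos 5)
First, a point of reference: the paper does not prove this lemma at all --- it is quoted as a classical criterion with the citation \cite{Boris} --- so your attempt must stand on its own, and it has a genuine gap, namely the one you flag yourself in the last sentences. What your sketch actually establishes is necessity together with upper bounds: since the flow of a Killing field preserves the scalar invariants $\kappa$, $g(d\kappa,d\kappa)$ and $\Delta\kappa$, one gets $d\kappa(u)=\beta(u)=\alpha(u)=0$, hence (given the linear-algebra fact about $v\otimes w$ being symmetric iff $v\parallel w$) dimension at most $1$ when $\kappa$ is non-constant and $0$ when a symmetry condition fails. But the substantive content of case (2) is the converse: that the two symmetry conditions \emph{suffice} for a Killing field to exist. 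Deferring this to ``the finite-type/involutivity structure used with care'' is not a proof; nothing in the sketch verifies that the prolonged system $\nabla u=A$, $\nabla A=\mathcal{R}(u)$ admits a parallel section under these hypotheses, i.e.\ that no further obstruction appears. The classical way to close this (Darboux) is constructive: the symmetry conditions say precisely that $g(d\kappa,d\kappa)$ and $\Delta\kappa$ are functions of $\kappa$ alone; taking $x_1=\kappa$ as a coordinate near a point where $d\kappa\neq0$, and $x_2$ a coordinate along the level curves chosen orthogonally, one computes that all metric coefficients are independent of $x_2$, so $\partial_{x_2}$ is a manifest Killing field. Case (1) also needs an existence argument (the curvature of the prolonged connection vanishes identically when $\kappa$ is constant, then Frobenius applies), though that part is standard.

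Two further concrete slips. In case (3) you conclude from ``$u=0$ at the point'' that $u\equiv0$; this inference is false for Killing fields in general --- rotational Killing fields vanish at their centres. The repair is that failure of the symmetry condition is an open condition, so $u$ vanishes on an open set, hence $u$ and $\nabla u$ vanish simultaneously at some point, and only then does the finite-type property give $u\equiv0$. In case (2), your count ``$d\kappa(u)=0$ cuts the fibre down to a line'' concerns only the value $u(p)$, whereas the initial data of the prolonged system is the triple $(u(p),\nabla u(p))\in\mathbb{R}^3$; to get $\dim\le1$ you must also pin down the antisymmetric part $\nabla u(p)$. For instance: if $u(p)=0$ and $d\kappa(p)\neq0$, differentiating the identity $d\kappa(u)=0$ gives $d\kappa(\nabla u\,X)=0$ for every $X$, and a nonzero $g$-antisymmetric endomorphism of a $2$-plane is invertible, which would force $d\kappa(p)=0$, a contradiction; hence $\nabla u(p)=0$ and $u\equiv0$. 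This shows evaluation $u\mapsto u(p)$ embeds the Killing algebra into the line $\ker d\kappa(p)$, which is the statement your count actually needs.
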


On the other hand if one would like to compute which metrics admit Killing fields then the symmetry conditions above give a system of two nonlinear PDE for the three components of the metric. One equation is of the fourth order and the other is of fifth order. Nonlinearity and high order makes this system difficult to handle even though the system is underdetermined.

\section{Eigenvalue problem}
We will from now on always suppose that $M$ is compact. 
Then let us write $S_u$ and $C_u$ when we consider $Su$ and $Cu$ as tensors of type $(1,1)$; pointwise they are thus maps $T_pM\to T_pM$. 
One can readily check that $S_u$ is symmetric: i.e. $g(S_u v,w)=g(v,S_u w)$ for all  $v$ and $w$. Obviously then $C_u$ is also symmetric.

Let us then introduce the following bilinear maps:
\begin{align*}
a_K\,:\,H^1(M)\times H^1(M)\to\mathbb{R}\quad,\quad
  a_K(u,v)=& \tfrac{1}{2} \int_M g(S_u,S_v)\omega_M \\
  a_C\,:\,H^1(M)\times H^1(M)\to\mathbb{R}\quad,\quad
  a_C(u,v)=& \tfrac{1}{2} \int_M g(C_u,C_v)\omega_M
\end{align*}
Then we can formulate the following eigenvalue problems:
\begin{itemize}
\item[(K)] Find $u\in H^1(M)$ and $\lambda$ such that
\begin{equation*}
a_K(u,v)=
   \lambda \int_M g(u,v)\omega_M
\end{equation*}
for all $v\in H^1(M)$.
\item[(CK)] Find $u\in H^1(M)$ and $\lambda$ such that
\begin{equation*}
a_C(u,v)=
   \lambda \int_M g(u,v)\omega_M
\end{equation*}
for all $v\in H^1(M)$.
\end{itemize}
Now evidently $a_K(u,u)\ge 0$ and  $a_C(u,u)\ge 0$ for all $u$, and   $a_K(u,u)=0$ (resp.  $a_C(u,u)=0$) only if $u$ is Killing (resp. conformally Killing) so that the eigenspace of zero eigenvalue is the space of Killing fields (resp. conformally Killing fields). 

It is clear that $a_K$ and $a_C$ are symmetric and continuous, so that in particular $\lambda$ must be real. Then we should show that the maps $a_K$ and $a_C$ are coercive. 
Now in fact  the coercivity of $a_K$ in $\mathbb{R}^n$ is a classical result known as \emph{Korn's inequality}. This inequality can also be extended to the Riemannian context \cite{chen,taylor1} and  the corresponding coercivity result is also valid for the map $a_C$ \cite{sergio}.   

The following Theorem gives the result when the manifold has no boundary. This is not a new result, but we think that the proof is interesting because it is simple and it shows the result for both $a_K$ and $a_C$ in the same way.  In the following proof we use several formulas which are computed in \cite{NSE} to which we refer for details.
\begin{theorem} The maps $a_K$ and $a_C$ are coercive, if $M$ has no boundary.
\label{koersiivinen}
\end{theorem}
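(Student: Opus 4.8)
The plan is to derive a Bochner-type identity that rewrites $a_K(u,u)$ as the sum of the gradient part of the $H^1$ norm, a nonnegative divergence term, and a curvature term that is controlled by the $L^2$ norm. Once this identity is in hand, coercivity is immediate from the compactness of $M$. The whole geometric content sits in producing the identity; the estimate itself is soft.

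First I would expand the integrand of $a_K$. Writing $S_u$ in the contravariant form $(S_u)^{ik}=u^{i;k}+u^{k;i}$ and using the fiberwise inner product \eqref{sisätulo}, one finds $g(S_u,S_u)=2\,u^{i;k}u_{i;k}+2\,u^{i;k}u_{k;i}=2|\nabla u|^2+2\,u^{i;k}u_{k;i}$, so that
\[
a_K(u,u)=\int_M |\nabla u|^2\,\omega_M+\int_M u^{i;k}u_{k;i}\,\omega_M .
\]
The first summand is exactly the gradient contribution to $\|u\|_{H^1}^2$, so the entire difficulty is concentrated in the cross term $\int_M u^{i;k}u_{k;i}\,\omega_M$.

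Next I would integrate this cross term by parts. Since $M$ has no boundary, applying the divergence theorem to the vector field $w^i=u^{i;k}u_k$ removes the boundary contribution and gives $\int_M u^{i;k}u_{k;i}\,\omega_M=-\int_M (u^{i;k})_{;i}\,u_k\,\omega_M$. In $(u^{i;k})_{;i}$ I would commute the two covariant derivatives via the Ricci identity, which turns the commutator into a curvature term; by the definition \eqref{ricci-tensori} this is precisely the Ricci tensor, yielding $(u^{i;k})_{;i}=(\mathsf{div}\,u)^{;k}+\mathsf{Ri}^k_\ell u^\ell$. Integrating the gradient-of-divergence term by parts once more (again with no boundary term) produces the identity
\[
a_K(u,u)=\int_M |\nabla u|^2\,\omega_M+\int_M (\mathsf{div}\,u)^2\,\omega_M-\int_M \mathsf{Ri}(u,u)\,\omega_M .
\]
These are exactly the formulas computed in \cite{NSE}, so I would quote them rather than redo the index manipulations; note that the sign $-\int_M\mathsf{Ri}(u,u)$ is consistent with the stated fact that negative definite Ricci curvature excludes Killing fields.

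Coercivity now follows directly. The divergence term is nonnegative and can be dropped, while compactness of $M$ makes $\mathsf{Ri}$ bounded, so $-\int_M \mathsf{Ri}(u,u)\,\omega_M\ge -C\|u\|_{L^2}^2$ with $C=\sup_M\|\mathsf{Ri}\|$; hence $a_K(u,u)+(C+1)\|u\|_{L^2}^2\ge \|u\|_{H^1}^2$, i.e.\ coercivity with $\alpha=1$ and $\mu=C+1$. For $a_C$ I would use $C_u=S_u-\tfrac{2}{n}\,\mathsf{div}(u)\,\mathrm{Id}$ together with $\mathsf{tr}(S_u)=2\,\mathsf{div}(u)$ and $g(\mathrm{Id},\mathrm{Id})=n$ to get $g(C_u,C_u)=g(S_u,S_u)-\tfrac{4}{n}(\mathsf{div}\,u)^2$, whence
\[
a_C(u,u)=\int_M |\nabla u|^2\,\omega_M+\Big(1-\tfrac{2}{n}\Big)\int_M (\mathsf{div}\,u)^2\,\omega_M-\int_M \mathsf{Ri}(u,u)\,\omega_M .
\]
Since $1-\tfrac{2}{n}\ge 0$ for $n\ge 2$, the divergence term is again nonnegative and the identical estimate gives coercivity of $a_C$. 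The main obstacle is the middle step: carrying out the two integrations by parts and, above all, keeping the sign and index placement straight when commuting covariant derivatives to extract $\mathsf{Ri}$. Everything else is routine, but the Bochner identity is where the geometry genuinely enters and where a convention error would be fatal.
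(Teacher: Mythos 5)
Your proposal is correct and follows essentially the same route as the paper: both derive the Bochner-type identity \eqref{killing-ominaisuus} (you by direct expansion, integration by parts, and the Ricci commutation; the paper by packaging the same computation into the operators $L_K=\mathsf{div}(Su)$, $L_C=\mathsf{div}(Cu)$ and the formulas \eqref{Lu-oper} quoted from \cite{NSE}), and then both conclude by dropping the nonnegative divergence term and bounding the Ricci term via $\mu=\max_{p\in M}\|\mathsf{Ri}\|$, which is finite by compactness. Your reduction $g(C_u,C_u)=g(S_u,S_u)-\tfrac{4}{n}(\mathsf{div}\,u)^2$ and the resulting treatment of $a_C$ likewise match the paper's chain of inequalities, so there is no substantive difference.
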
  
\begin{proof} Let us introduce the operators $ L_Ku=\mathsf{div}(Su)$ and $ L_Cu=\mathsf{div}(Cu)$. Then we compute
\begin{align*}
 \mathsf{div}(S_u v) =&\tfrac{1}{2} g(S_u,S_u)+ g(Lu,v)\\
   \mathsf{div}(C_u v) =     & \tfrac{1}{2} g(C_u,C_u)+ g(L_Cu,v)
\end{align*}
On the other hand
\begin{equation}
\begin{aligned}
   L_Ku=&\Delta_Bu+\mathsf{grad}(\mathsf{div}(u))+\mathsf{Ri}(u)\\
   L_Cu=&\Delta_B u+\big(1-\tfrac{2}{n}\big)\mathsf{grad}(\mathsf{div}(u))+\mathsf{Ri}(u)
\end{aligned}
\label{Lu-oper}
\end{equation}
where $\Delta_Bu= \mathsf{div}(g^{ij} u^k_{;i})=g^{ij} u^k_{;ij}$ is the Bochner Laplacian. Hence on the manifolds without boundary
\begin{equation}
\begin{aligned}
a_K(u,u)=&\int_M \Big( g(\nabla u,\nabla u)+\mathsf{div}(u)^2 -\mathsf{Ri}(u,u)\Big)\omega_M \\
\ge&\int_M \Big( g(\nabla u,\nabla u)+\big(1-\tfrac{2}{n}\big)\mathsf{div}(u)^2 -\mathsf{Ri}(u,u)\Big)\omega_M\\
=&a_C(u,u)\ge \int_M \Big( g(\nabla u,\nabla u) -\mathsf{Ri}(u,u)\Big)\omega_M
\end{aligned}
\label{killing-ominaisuus}
\end{equation}
Pointwise $\mathsf{Ri}$ can be interpreted as a linear map $T_pM\to T_pM$. Taking the operator norm at each point we can define $\mu=\max_{p\in M}\|\mathsf{Ri}\|$. Since $M$ is compact $\mu$ is finite. Hence
\[
   a_K(u,u)\ge a_C(u,u)\ge \int_M \Big( g(\nabla u,\nabla u) -\mu g(u,u)\Big)\omega_M \ge
   \alpha\|u\|^2_{H^1}-(\mu+\alpha)\|u\|^2_{L_2}
\]
if $0<\alpha\le 1$. 
\end{proof}

Note that from the formula \eqref{killing-ominaisuus} it follows that if $u$ is Killing then
\[
\int_M  g(\nabla u,\nabla u)\omega_M =
\int_M \mathsf{Ri}(u,u)\omega_M 
\]
and if $u$ is conformally Killing then
\[
\int_M  \Big( g(\nabla u,\nabla u)+\big(1-\tfrac{2}{n}\big)\mathsf{div}(u)^2 \Big)\omega_M =
\int_M \mathsf{Ri}(u,u)\omega_M 
\]
This shows directly that if $\mathsf{Ri}$ is negative definite there can be no Killing and conformally Killing  fields.

When the manifold has a boundary the proof is more difficult. Anyway the following results are valid:
\begin{itemize}
\item[] If $M$ is compact with Lipschitz boudary $\partial M$ then $a_K$ is coercive \cite{chen,taylor1} and 
 $a_C$ is coercive  for $n>2$ \cite{sergio}.
\end{itemize}
Our eigenvalue problems are thus well posed. For numerical purposes it is convenient to express $a_K$ and $a_C$ in a different form. Straightforward computations give the following formulas:
\begin{align*}
  a_K(u,v)=&\int_M \Big( g(\nabla u,\nabla v) + \mathsf{tr}(\nabla u \nabla v)  \Big)\omega_M \\
  a_C(u,v)=& \int_M \Big( g(\nabla u,\nabla v) + \mathsf{tr}(\nabla u \nabla v) - \tfrac{2}{n}\mathsf{div}(u)\mathsf{div}(v) \Big)\omega_M 
\end{align*}
It is perhaps useful to interpret the eigenvalue problems in the classical form. Let $p\in \partial M $ and let $\{\tau_1,\dots,\tau_{n-1}\}$ be a basis of $T_p\partial M$ and let $\nu$ be the outer unit normal vector.  Using the operators $L$ and $L_C$ introduced in the proof of Theorem \ref{koersiivinen} we can write the eigenvalue problems as follows. Again some details of the required computations can be found in \cite{NSE}.
 \begin{itemize}
\item[(K0)] Find $u$ and $\lambda$ such that
\[
\begin{cases}
-L_Ku=   \lambda u\\
 g(\nabla_\nu u,\tau_k) + g(\nabla_{\tau_k} u,\nu)=0\quad,\ k=1,\dots,n-1\\
   g(\nabla_\nu u,\nu) =0
\end{cases}
\]
\item[(CK0)] Find $u$ and $\lambda$ such that
\[
\begin{cases}
-L_Cu=   \lambda u\\
 g(\nabla_\nu u,\tau_k) + g(\nabla_{\tau_k} u,\nu)
 -\tfrac{2}{n}\,\mathsf{div}(u)g(v,\nu)=0\quad,\ k=1,\dots,n-1\\
   2g(\nabla_\nu u,\nu)  -\tfrac{2}{n}\,\mathsf{div}(u)g(v,\nu)=0
\end{cases}
\]
\end{itemize}
Note that if $u$ is Killing (resp. conformally Killing) then it satisfies the boundary conditions of problem (K0) (resp. problem (CK0)). Finally let us note that our operators are in fact elliptic. 
\begin{lemma}
Operators $L_K$ and $L_C$ are elliptic and moreover their symbols are symmetric.
\end{lemma}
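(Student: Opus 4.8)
The plan is to read the principal symbols directly off the second-order forms \eqref{Lu-oper}. The zeroth-order term $\mathsf{Ri}(u)$ does not affect $\sigma L_K$ or $\sigma L_C$, so the symbol comes entirely from the Bochner Laplacian $\Delta_B u=g^{ij}u^k_{;ij}$ and from $\mathsf{grad}(\mathsf{div}(u))$, whose $k$-th component is $g^{kj}u^\ell_{;\ell j}$. First I would compute the two symbols separately: replacing $u^k_{;ij}$ by $\xi_i\xi_j u^k$ gives $\sigma(\Delta_B)=(g^{ij}\xi_i\xi_j)\,\mathrm{Id}=|\xi|^2\,\mathrm{Id}$, while replacing $u^\ell_{;\ell j}$ by $\xi_\ell\xi_j u^\ell$ gives the rank-one map with components $\big(\sigma(\mathsf{grad}\circ\mathsf{div})\big)^k_\ell=g^{kj}\xi_j\xi_\ell=\xi^k\xi_\ell$, where $\xi^k=g^{kj}\xi_j$ and $|\xi|^2=g^{ij}\xi_i\xi_j$. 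Since both $L_K$ and $L_C$ map vector fields to vector fields, these symbols are genuinely square matrices, as required by our standing assumption.

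Combining the pieces, I obtain
\begin{equation*}
(\sigma L_K)^k_\ell = |\xi|^2\delta^k_\ell+\xi^k\xi_\ell,\qquad
(\sigma L_C)^k_\ell = |\xi|^2\delta^k_\ell+\big(1-\tfrac{2}{n}\big)\xi^k\xi_\ell .
\end{equation*}
For the symmetry claim I would treat both at once: for an arbitrary scalar $c$, the map with matrix $|\xi|^2\delta^k_\ell+c\,\xi^k\xi_\ell$ sends a vector $v$ to $|\xi|^2 v+c\,(\xi_\ell v^\ell)\,\xi^\sharp$, so that
\begin{equation*}
g(\sigma v,w)=|\xi|^2\,g(v,w)+c\,(\xi_k v^k)(\xi_\ell w^\ell),
\end{equation*}
using $g(\xi^\sharp,w)=\xi_k w^k$. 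The right-hand side is manifestly symmetric in $v$ and $w$, which establishes self-adjointness with respect to $g$ for both $L_K$ (take $c=1$) and $L_C$ (take $c=1-\tfrac{2}{n}$).

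For ellipticity I would show injectivity of each symbol for $\xi\ne0$. Setting $w=v$ above yields $g(\sigma v,v)=|\xi|^2\,g(v,v)+c\,(\xi_k v^k)^2$. For $L_K$ the constant $c=1>0$, so $g(\sigma L_K v,v)\ge|\xi|^2 g(v,v)>0$ whenever $\xi\ne0$ and $v\ne0$; for $L_C$ the constant $c=1-\tfrac{2}{n}\ge0$ for every $n\ge2$, giving the same strict bound. Thus both symbols are positive definite, hence injective, and the operators are elliptic. Equivalently, since each symbol is a rank-one update of $|\xi|^2\,\mathrm{Id}$, the matrix determinant lemma gives $\det(\sigma L_K)=2|\xi|^{2n}$ and $\det(\sigma L_C)=\tfrac{2(n-1)}{n}|\xi|^{2n}$, both nonzero for $\xi\ne0$. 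This is a routine symbol computation rather than a deep result; the only point that needs real care is the grad--div contribution to the symbol together with the borderline case $n=2$ for $L_C$, where the rank-one term drops out and $\sigma L_C=|\xi|^2\,\mathrm{Id}$ remains elliptic. Note in particular that, unlike the first-order operator $C$, the second-order operator $L_C$ stays elliptic even in dimension two.
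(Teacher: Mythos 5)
Your proof is correct and follows essentially the same route as the paper: read the principal symbols off formula \eqref{Lu-oper} as $g(\xi,\xi)\mathsf{id}$ plus a rank-one term, verify symmetry by computing $g(\sigma u,v)$, and get ellipticity from the pointwise positive definiteness $g(\sigma u,u)>0$ for $\xi\ne0$. Your unified constant $c$, the determinant check via the matrix determinant lemma, and the explicit remark on the borderline case $n=2$ (where $1-\tfrac{2}{n}=0$) merely spell out what the paper compresses into ``evidently the same computations prove the statement also for $L_C$.''
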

\begin{proof}
Let us denote the identity map in $T_pM$ by $\mathsf{id}$.  From the formula \eqref{Lu-oper} it readily follows that 
\begin{equation*}
\sigma L_K=g(\xi,\xi)\mathsf{id}+g^{ij}\xi_j\xi_k
\end{equation*}
Then we compute
\begin{align*}
  g(\sigma L_K u,v)=&g(\xi,\xi)g(u,v)+\xi_i u^i\xi_jv^j= g(u, \sigma L_K v)\\
  g(\sigma L_K u,u)=&g(\xi,\xi)g(u,u)+(\xi_i u^i)^2
\end{align*}
Evidently the same computations prove the statement also for $L_C$.
\end{proof}

The well posedness of the eigenvalue problem thus also follows from the ellipticity of the operators $L_K$ and $L_C$. Note that the characteristic polynomials of $L_K$ and $L_C$ are of order $2n$ so that the number of the boundary conditions is correct in problems (K0) and (CK0).

\section{Numerical results}
\label{sec;examples}

\subsection{Implementation}

We have used standard finite element method in our computations, and almost everything was computed with the software \textsf{FREEFEM++} \cite{Freefem}. The standard algorithms produce a quasiuniform triangulation in Euclidean metric for the coordinate domain, but in our case it is important to modify this so that the resulting triangulation is quasiuniform in the given Riemannian metric. This can also be done with \textsf{FREEFEM++}. An example is shown in Figure \ref{metric} where on the left there is the initial triangulation and on the right is the adapted triangulation. The metric in this case corresponds to the standard torus which will be considered in the examples below.

\begin{figure}[!t]
   \centering
   \includegraphics[width=1\textwidth]{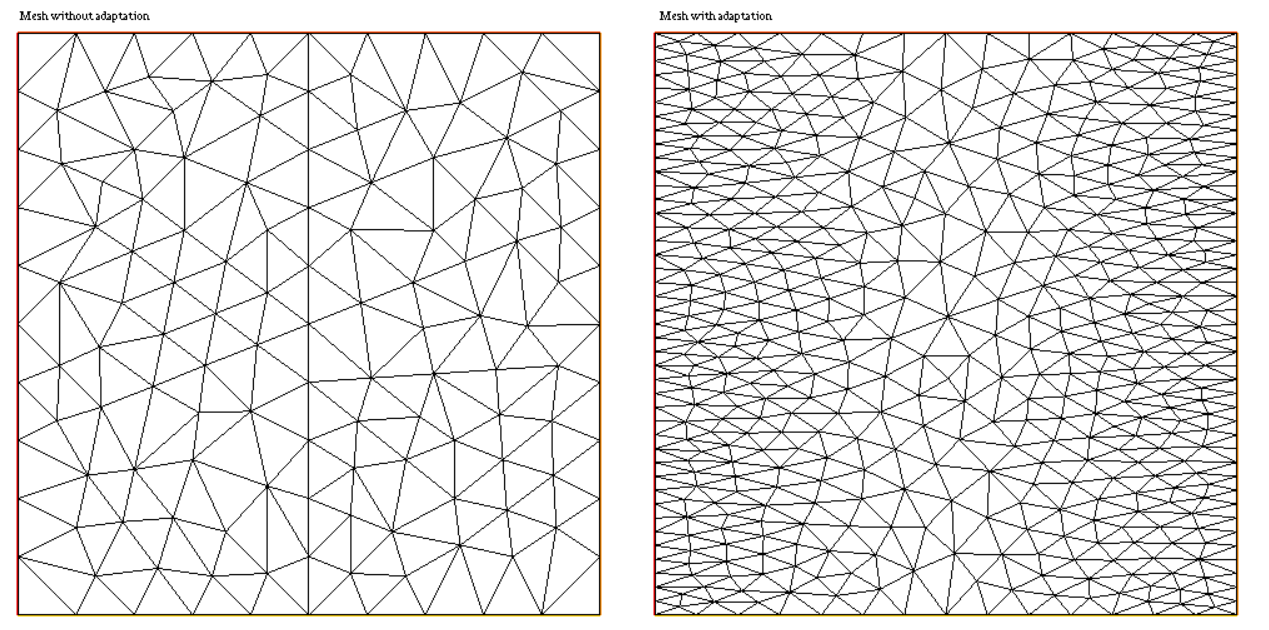}
   \caption{Initial triangulation on the left. The adapted triangulation on the right is quasiuniform for the appropriate Riemannian metric.}
   \label{metric}
\end{figure}

We will solve problems (K) and (CK) in three cases: Enneper's surface, torus and the Klein bottle. In case of Enneper's surface we have a manifold with boundary and a single coordinate chart so that the problem can be formulated in the standard way in \textsf{FREEFEM++}. The torus is a nontrivial manifold but analytically solving problems on the torus means that we look for the periodic solutions. Numerically this can be taken into account by so called periodic boundary conditions, and these are also implemented in \textsf{FREEFEM++}. 

The Klein bottle is a nonorientable surface which cannot be embedded in $\mathbb{R}^3$ but it can be embedded in $\mathbb{R}^4$. Here also one can use a single coordinate domain but now the identifications of the domain boundaries are nonstandard and cannot be done with \textsf{FREEFEM++}. In this case we implemented the appropriate identifications and the assembly of relevant matrices directly with \textsf{C++}.  

In all cases, for the numerical integration, we used quadrature formula on a triangle which is exact for polynomials of degrees less or equal to five. For more informations about the theory and implementation of quadratures, see \cite{ern}. We used \textsf{FREEFEM++} to visualize the computed solutions.

\subsection{Special properties of the two dimensional case}
In two dimensional case there is a special relationship between Killing and conformal Killing vector fields which is convenient to know when considering the examples. 
Let us introduce the tensor 
\[
   \eps=\sqrt{\det(g)}\big(dx_1\otimes dx_2-dx_2\otimes dx_1\big)
\]
Note that  $\nabla \eps=0$. Then let us define the operator $K$ by the formula
\begin{equation}
   v=Ku\quad\longleftrightarrow\quad v^k=g^{ki}\eps_{ij}u^j
\label{K-määr}
\end{equation}
Intuitively $K$ rotates the vector field by $90$ degrees. Then we have
\begin{lemma} Let $u$ be a Killing field. Then $Ku$ is a conformal Killing field. 
\label{conf-killing-rot}
\end{lemma}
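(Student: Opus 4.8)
The plan is to prove the statement pointwise, by showing that when $u$ is Killing the covariant derivative of $v=Ku$ is at each point a scalar multiple of the identity; since a pure-trace $\nabla v$ forces the trace-free part of $Sv$ to vanish, this gives $Cv=0$ at once.

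First I would record the compatibility of $K$ with covariant differentiation. Because both $g$ and $\eps$ are parallel (the excerpt already notes $\nabla\eps=0$, and $\nabla g=0$ always), differentiating the defining relation $v^k=g^{ki}\eps_{ij}u^j$ gives $v^k_{;m}=g^{ki}\eps_{ij}u^j_{;m}$. Viewing $\nabla u$ and $\nabla v$ as $(1,1)$ tensors, that is, as endomorphisms $A=\nabla u$ and $\nabla v$ of each tangent space $T_pM$, this says exactly that $\nabla v = K\circ A$: the operator $K$ simply passes through the derivative.

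Second I would translate the two defining conditions into statements about endomorphisms of $T_pM$ equipped with the inner product $g$. The Killing condition $Su=0$ is equivalent to $u_{i;j}+u_{j;i}=0$, i.e. to $A=\nabla u$ being skew-adjoint, $g(Aw,w')=-g(w,Aw')$; and $K$, being rotation by ninety degrees, is a skew-adjoint orthogonal complex structure, so $K^2=-\mathsf{id}$. Here enters the one genuinely two-dimensional input: on a two-dimensional inner product space the skew-adjoint endomorphisms form a one-dimensional space spanned by $K$. Hence $A=aK$ for some scalar function $a$, and therefore $\nabla v=K\circ A=aK^2=-a\,\mathsf{id}$. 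It remains to conclude: since $\nabla v=-a\,\mathsf{id}$ is symmetric and pure trace, one has $\mathsf{div}(v)=\mathsf{tr}(\nabla v)=-2a$ and $Sv=-2a\,g^{ij}$, so in dimension two $Cv=Sv-\mathsf{div}(v)g^{ij}=-2a\,g^{ij}+2a\,g^{ij}=0$, i.e. $v=Ku$ is conformally Killing.

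The main obstacle, or rather the one step carrying all the content, is the pointwise linear-algebra dictionary: recognizing that ``Killing'' means $\nabla u$ is $g$-skew, that in two dimensions the skew-adjoint endomorphisms are exactly the multiples of the rotation $K$, and that left-composition with $K$ therefore sends such an $A$ to a multiple of the identity through $K^2=-\mathsf{id}$. Everything else is bookkeeping with the parallel tensors $g$ and $\eps$. As a cross-check one can redo the computation in a local orthonormal frame, where $A$ becomes an antisymmetric matrix $b\,\eps$ and $KA=b\,\eps^2=-b\,\mathsf{id}$, making the pure-trace conclusion transparent; this also makes clear that the result is special to $n=2$ and has no direct analogue in higher dimensions, where $K$ is not even defined.
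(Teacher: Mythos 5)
Your proof is correct, and it takes a genuinely different route from the paper's. The paper argues by direct computation in coordinates: it writes out the three Killing equations for $u$ and the two conformal Killing equations for $v=Ku$, expresses the covariant derivatives of $v$ in terms of those of $u$ (using $\nabla\eps=0$, $\nabla g=0$), and substitutes to check that the latter equations are satisfied. Your argument instead isolates the invariant content: parallelism of $g$ and $\eps$ gives $\nabla(Ku)=K\circ\nabla u$; the Killing condition says the endomorphism $\nabla u$ is $g$-skew-adjoint; in dimension two the skew-adjoint endomorphisms of $T_pM$ form exactly the line spanned by $K$, so $\nabla u=aK$ and hence $\nabla v=aK^2=-a\,\mathsf{id}$, which is pure trace and therefore satisfies $Cv=0$. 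Each step checks out, including the final bookkeeping $Sv=-2a\,g^{ij}$, $\mathsf{div}(v)=-2a$, $Cv=0$. What the paper's computation buys is brevity and self-containedness: two displayed systems and a substitution, nothing to set up. What yours buys is explanation and a slightly stronger conclusion: it makes transparent exactly where two-dimensionality enters (only in dimension two is every skew endomorphism a multiple of $K$), and it shows not merely that the trace-free symmetric part of $\nabla v$ vanishes but that all of $\nabla v$ is a multiple of the identity—in particular its antisymmetric part vanishes, so $Ku$ is also irrotational, which is consistent with the paper's later remark that conformal Killing fields on surfaces describe gradient dynamics while Killing fields describe rotational (Hamiltonian) dynamics.
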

\begin{proof} The Killing equations are
\begin{align*}
  & g^{11}u^1_{;1}+g^{12}u^1_{;2}=0\\
   &  g^{11}u^2_{;1}+g^{12}u^2_{;2}+g^{12}u^1_{;1}+g^{22}u^1_{;2}=0\\
   & g^{12}u^2_{;1}+g^{22}u^2_{;2}=0
\end{align*}
Let $v=Ku$; the conformal Killing equations are
\begin{align*}
    &g^{11}v^2_{;1}+g^{22}v^1_{;2}=0\\
    & g^{11}v^1_{;1}+2g^{12}v^1_{;2}-g^{11}v^2_{;2}=0
\end{align*}
Now simply substituting the covariant derivatives of $v$ to conformal Killing equations one checks that they are satisfied if $u$ satisfies the Killing equations.
\end{proof}

Note that this result shows that Killing vector fields and conformal Killing vector fields are of  completely  different nature, at least in two dimensional case. For example on the sphere Killing fields generate rotations so they give rise to Hamiltonian dynamics. Conformal Killing fields on the other hand describe the gradient dynamics. 

 A surface of revolution is a surface in $\mathbb{R}^3$ which has the parametrization
\[
  \varphi(x)=\begin{pmatrix}
  c_1(x_1)\cos(x_2) \\
  c_1(x_1)\sin(x_2) \\
 c_2(x_1)
 \end{pmatrix}
\]
The curve $c(x_1)=(c_1(x_1),c_2(x_1))$ is known as the \emph{profile curve}, the curves on the surface with $x_1$ constant are \emph{parallels} and the curves with $x_2$ constant are \emph{meridians}.
\begin{lemma} On the surfaces of revolution vector fields $b\partial_{x_2}$ where $b$ is constant are Killing fields. There are no other Killing fields unless the profile curve has a constant curvature. 
\label{killing-rev}
\end{lemma}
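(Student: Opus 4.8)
The plan is to put the metric into a normal form, read off the obvious Killing field, and then reduce the uniqueness statement to the classical criterion of Lemma~\ref{number_KF}. First I would reparametrize the profile curve $c=(c_1,c_2)$ by arc length, so that $c_1'^2+c_2'^2=1$. A direct computation of $\varphi_{x_1}\cdot\varphi_{x_1}$, $\varphi_{x_1}\cdot\varphi_{x_2}$ and $\varphi_{x_2}\cdot\varphi_{x_2}$ shows that the induced metric is diagonal and independent of $x_2$, namely $g=dx_1^2+f(x_1)^2\,dx_2^2$ with $f=c_1$. The relevant Christoffel symbols are then $\Gamma^1_{22}=-ff'$ and $\Gamma^2_{12}=f'/f$ (the rest vanishing). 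Substituting the constant field $u^1=0,\ u^2=b$ into the Killing system written in the proof of Lemma~\ref{conf-killing-rot} verifies all three equations immediately, which establishes that $b\partial_{x_2}$ is Killing; the same substitution with $b$ allowed to vary shows that the equations force $\partial_1 b=\partial_2 b=0$, so $b$ must indeed be constant.

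For the second assertion I would invoke Lemma~\ref{number_KF}. The Gaussian curvature of $g=dx_1^2+f^2\,dx_2^2$ is $\kappa=-f''/f$, a function of $x_1$ alone. Hence $d\kappa=\kappa'\,dx_1$, and since $g^{11}=1$ with the metric diagonal, both $g(d\kappa,d\kappa)=(\kappa')^2$ and $\Delta\kappa=\kappa''+(f'/f)\kappa'$ are again functions of $x_1$ only. Consequently $\beta=\tfrac12\,d\,g(d\kappa,d\kappa)$ and $\alpha=d\Delta\kappa$ are both proportional to $dx_1$, so the tensors $d\kappa\otimes\beta$ and $d\kappa\otimes\alpha$ have only a $(1,1)$-component and are therefore automatically symmetric. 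The trichotomy of Lemma~\ref{number_KF} thus collapses to a dichotomy governed solely by whether $\kappa$ is constant: if $\kappa$ is non-constant the space of Killing fields is one-dimensional, hence spanned by $\partial_{x_2}$ and admitting no further fields, whereas if $\kappa$ is constant it is three-dimensional. As a cross-check I would also integrate the Killing system directly: the first equation gives $u^1=u^1(x_2)$, and the remaining two reduce to a separated relation whose solvability for a non-constant $u^1$ forces $(f'/f)'=\lambda f^{-2}$, i.e.\ $ff''-(f')^2=\lambda$; the first integral $(f')^2=2Cf^2-\lambda$ of this ODE then shows $\kappa=-2C$, so that extra Killing fields exist precisely when $\kappa$ is constant.

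The remaining point, and the one I expect to be the main obstacle, is the translation of the intrinsic condition ``$\kappa$ constant'' into the extrinsic statement ``the profile curve has constant curvature''. Here one uses that for a surface of revolution the Gaussian curvature factors as the product $\kappa=k_1k_2$ of the principal curvatures, where $k_1$ is the curvature of the generating profile curve; constancy of the product does not in general force constancy of $k_1$, so the two notions are genuinely distinct and the identification requires care. The way I would close this gap is to restrict to complete smooth profiles and appeal to the classification of surfaces of revolution of constant Gaussian curvature, for which the admissible generating curves are exactly circles and straight lines, both of constant curvature; this is where the smoothness of the surface is used to exclude the singular constant-curvature examples (such as spindle-type surfaces) whose profiles are not of constant curvature. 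I would therefore state the equivalence cleanly in terms of constant Gaussian curvature and carry out this final extrinsic identification as the delicate last step.
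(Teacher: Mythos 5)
Your proof follows the same route as the paper's, which is only three lines long: write down the Killing system for a surface of revolution, observe that $u=b\partial_{x_2}$ solves it, and invoke Lemma \ref{number_KF} for uniqueness. What you add is precisely what the paper leaves implicit: the verification that the symmetry hypotheses of Lemma \ref{number_KF} hold automatically here, because $\kappa$ depends on $x_1$ alone, so that $d\kappa\otimes\beta$ and $d\kappa\otimes\alpha$ are proportional to $dx_1\otimes dx_1$ and hence symmetric. Your separation-of-variables cross-check is also correct: from $u^1_{,1}=0$ and the mixed derivatives of $u^2$ one gets $f^2(f'/f)'\,u^1(x_2)=u^1_{,22}(x_2)$, so a solution with $u^1\not\equiv 0$ forces $ff''-(f')^2=\lambda$ and thus $\kappa=-f''/f$ constant.

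The ``delicate last step'' you single out is a genuine issue, and it is a gap in the paper's own proof rather than in yours: Lemma \ref{number_KF} governs constancy of the \emph{Gaussian} curvature, while the statement speaks of the curvature of the \emph{profile curve}, and the two are inequivalent in both directions. The standard torus has a circular profile (constant curvature $1$) but $\kappa=\cos(x_1)/(2+\cos(x_1))$ is non-constant; the pseudosphere has a tractrix profile (non-constant curvature) but $\kappa\equiv -1$, hence locally a three-dimensional space of Killing fields, contradicting the literal (local) reading of the statement. The paper's proof silently identifies the two notions; read literally, the lemma is then false in general and moreover vacuous exactly where its uniqueness clause is needed, namely for the torus of Section \ref{sec7.2}, whose profile curve does have constant curvature. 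Your repair via completeness and the classification of complete constant-curvature surfaces of revolution (sphere, plane, cylinder) is sound, but it imports a hypothesis absent from the statement and still leaves the incomplete examples (pseudosphere, spindles, barrels) outside the lemma's scope. The cleaner fix, which your own computation already delivers and which is evidently what the authors intend, is to restate the uniqueness clause as ``unless the Gaussian curvature is constant''; with that reading your argument is complete, and strictly more careful than the paper's.
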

\begin{proof} The Killing equations for the surfaces of revolution are
\begin{align*}
  & |c'|^2u^1_{,1}+\langle c',c''\rangle u^1=0\\
   &   |c'|^2u^1_{,2}+c_1^2u^2_{,1}=0\\
   & c_1u^2_{,2}+c_1'u^1=0
\end{align*}
Clearly the fields $u=b\partial_{x_2}$ are solutions and by Lemma \ref{number_KF} there can be no other Killing fields, unless the profile curve has a constant curvature. 
\end{proof}

By Lemma \ref{conf-killing-rot} we thus have the following conformal Killing field on the surface of revolution:
\begin{equation}
    v=K\partial_{x_2}=g^{11}\eps_{12}\partial_{x_1}
    \label{conf-ratk}
\end{equation}

\subsection{Enneper's surface}
Our first example is the classical Enneper's surface which is also a minimal surface. Enneper's surface in $\mathbb{R}^3$ is given by the following map:
\[
  \varphi(x)=\begin{pmatrix}
  x_1 - \tfrac{1}{3}x_1^3 + x_1x_2^2 \\
 -x_2 + \tfrac{1}{3}x_2^3 - x_1^2x_2 \\
 x_1^2 - x_2^2
 \end{pmatrix}
\]
Let us recall that a coordinate system of a two dimensional Riemannian manifold is \emph{isothermal}, if the metric is of the form
\[
   g=e^{\lambda(x)}\big(dx_1\otimes dx_1+dx_2\otimes dx_2\big)
\]
for some function $\lambda$. The metric for Enneper's surface is of this form  with $\lambda =2\ln\big(1+|x|^2\big)$. 
Simply doing the computations we find that when the parametrization is isothermal then
\begin{equation}
   Su=0\quad\Longleftrightarrow\quad
   \begin{cases}
    u^1\lambda_{,1} + u^2\lambda_{,2} + 2u^2_{,2} = 0 \\
    u^1_{,2} + u^2_{,1} = 0  \\
    u^1_{,1} - u^2_{,2} = 0 \\
\end{cases}
\label{isothermal}
\end{equation}
where comma denotes the standard (not covariant) derivative. Note that the second and third equations are the Cauchy Riemann equations for components of $u$. 

In this case the Killing field can be explicitly computed. 
\begin{lemma} Vector fields $u=- b\,x_2 \partial_{x_1} + b\, x_1 \partial_{x_2}$ where $b$ is a constant are Killing fields on Enneper's surface.
\end{lemma}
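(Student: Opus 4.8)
The plan is to verify directly that $u$ satisfies the Killing equations in the isothermal form \eqref{isothermal}, using that the Enneper metric is isothermal with $\lambda=2\ln\big(1+|x|^2\big)$. First I would record the components $u^1=-b\,x_2$ and $u^2=b\,x_1$ together with their relevant first partial derivatives, of which only a few are nonzero: $u^1_{,1}=0$, $u^1_{,2}=-b$, $u^2_{,1}=b$, and $u^2_{,2}=0$.

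With these in hand, the second and third equations of \eqref{isothermal} hold immediately: the third reads $u^1_{,1}-u^2_{,2}=0$ and the second reads $u^1_{,2}+u^2_{,1}=-b+b=0$. These are exactly the Cauchy--Riemann equations for the components of $u$ noted after \eqref{isothermal}, and they are satisfied by the infinitesimal rotation field regardless of the conformal factor $\lambda$.

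The only equation that involves the particular shape of the Enneper metric is the first, namely $u^1\lambda_{,1}+u^2\lambda_{,2}+2u^2_{,2}=0$. Since $u^2_{,2}=0$, it remains to check that $u^1\lambda_{,1}+u^2\lambda_{,2}=0$. Computing the two partials of $\lambda$ (each proportional to $x_1$, respectively $x_2$, over the common denominator $1+|x|^2$) and substituting, the result is a sum of two terms proportional to $x_1x_2$ carrying opposite signs, which cancel.

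I do not expect a genuine obstacle, as the whole argument reduces to this short substitution. The one conceptual point worth flagging is that the final cancellation is forced by the rotational symmetry of the metric: the factor $\lambda$ depends only on $|x|$, so its gradient is radial, while $u$ generates rotations and is hence orthogonal to the radial direction in the $(x_1,x_2)$ sense. The first Killing equation is precisely the vanishing of the directional derivative of $\lambda$ along $u$, which is why it holds automatically for this field on any rotationally symmetric isothermal surface.
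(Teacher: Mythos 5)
Your proof is correct, but it takes a genuinely different route from the paper's. You verify by direct substitution that the rotation field satisfies the three equations of \eqref{isothermal}: the Cauchy--Riemann pair holds for the linear rotation field independently of the conformal factor, and the first equation reduces to $d\lambda(u)=0$, which holds because $\lambda=2\ln\big(1+|x|^2\big)$ is radial while $u$ is tangent to the circles centred at the origin. The paper instead takes the system \eqref{isothermal} specialized to Enneper's surface and simplifies it with computer algebra (\textsf{rifsimp} in {\sc Maple}) to the equivalent system $x_1u^1+x_2u^2=0$, $u^2_{,2}=0$, $x_1u^2_{,1}-u^2=0$, from which the general solution can be read off. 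The trade-off is this: your substitution is elementary, self-contained, and explains \emph{why} the result holds --- the argument generalizes verbatim to any rotationally symmetric isothermal metric --- but it establishes only that the given fields are Killing. The paper's reduction produces the full solution space of the system, so it simultaneously shows that these are the \emph{only} Killing fields; in the paper that uniqueness claim is in any case also covered separately by the curvature criterion of Lemma \ref{number_KF}, since the Gaussian curvature of Enneper's surface is not constant. For the lemma exactly as stated, your verification is sufficient and arguably cleaner.
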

Since the curvature is not constant (it is in fact $\kappa=-4/(1+|x|^2)^4$) there are no other Killing fields by Lemma \ref{number_KF}.

\begin{proof} The system \eqref{isothermal} gives in this case
\[
     \begin{cases}
  2 x_1 u^1 +2x_2 u^2 + u^2_{,2}(1+|x|^2) = 0 \\
    u^1_{,2} + u^2_{,1} = 0  \\
    u^1_{,1} - u^2_{,2} = 0 \\
\end{cases}
\]
This is equivalent to\footnote{The command \textsf{rifsimp} in {\sc Maple} is useful here.}
\[
     \begin{cases}
     x_1u^1+x_2 u^2=0\\
    u^2_{,2}=0\\
    x_1u^2_{,1}-u^2=0
    \end{cases}
\]
From this the result easily follows.
\end{proof}

Let us consider the coordinate domain $\mathcal{D}$ defined by the following boundaries :

\begin{align*}
\partial\mathcal{D}_1 &= \{ \big(\tfrac{1}{2}(\cos(t)+1),\sin(t)\big)\ ,\ t\in[0,\tfrac{\pi}{2}]  \} &
\partial\mathcal{D}_2 &= \{ \big(\tfrac{1}{2}-t, 1\big)\ ,\ t\in[0,\tfrac{1}{2}]   \} \\
\partial\mathcal{D}_3 &= \{ \big(\cos(t),\sin(t)\big)\ ,\ t\in[\tfrac{\pi}{2},\pi]  \} &
\partial\mathcal{D}_4 &= \{ \big(-\tfrac{1}{2}(\cos(t)+1),-\sin(t)\big)\ ,\ t\in[0,\tfrac{\pi}{2}]  \} \\
\partial\mathcal{D}_5 &= \{ \big( t,-1\big)\ ,\ t\in[-\tfrac{1}{2},0]  \} &
\partial\mathcal{D}_6 &= \{\big(\cos(t),\sin(t)\big)\ ,\ t\in[3\tfrac{\pi}{2},2\pi]  \} 
\end{align*}

The triangulated domain, with around 2000 triangles,  is shown in Figure \ref{fig:1} on the left. The metric was used to adapt the triangulation so that it is quasiuniform on the surface. The surface with the triangulation is shown in Figure \ref{fig:1} on the right.

\begin{figure}[!t]
   \centering
   \includegraphics[width=1\textwidth]{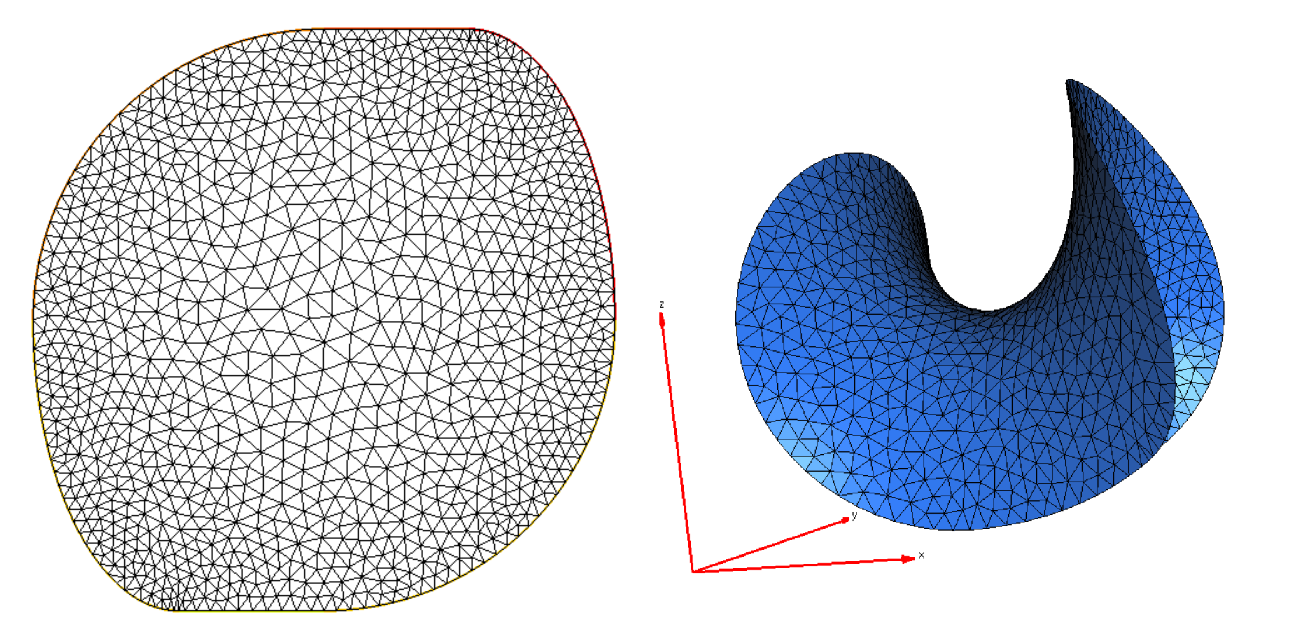}
   \caption{The coordinate domain of Enneper's surface on the left and the corresponding embedding in $\mathbb{R}^3$ on the right with adapted triangulation.}
   \label{fig:1}
\end{figure}

The computation of $a_K$ is in fact easy for all isothermal surfaces, and for Enneper's surface we obtain:
\[
a_k(u,v)=\int_\mathcal{D} ( 2u^1_{;1}v^1_{;1} + 2u^2_{;2}v^2_{;2} + u^1_{;2}v^1_{;2} + u^2_{;1}v^2_{;1} + u^1_{;2}v^2_{;1} + u^2_{;1}v^1_{;2} )(1+|x|^2)^2 dx_1dx_2
\]
Now in fact we  get essentially an exact solution up to rounding errors with $P_1$ elements. Checking the formulas for covariant derivatives one notices that if the components of $u$ and $v$ are polynomials of degree $m$ then the integrand is a polynomial of degree $2m+2$. Hence using $P_1$ elements integrands are of degree $4$ and they are integrated exactly by the default method of \textsf{FREEFEM++}. On the other hand analytically the components of exact solution are polynomials of degree one so that the approximation error is zero \cite{ern}. 

So already with about 100 triangles the approximate eigenvalue is $\lambda\approx 10^{-16}$ and the relative error in $L^2$ norm is about $10^{-13}$  and in $H^1$ norm it is about $10^{-12}$.
The computed field is shown  in Figure  \ref{fig:5}. 


\begin{figure}[!t]
   \centering
   \includegraphics[width=0.7\textwidth]{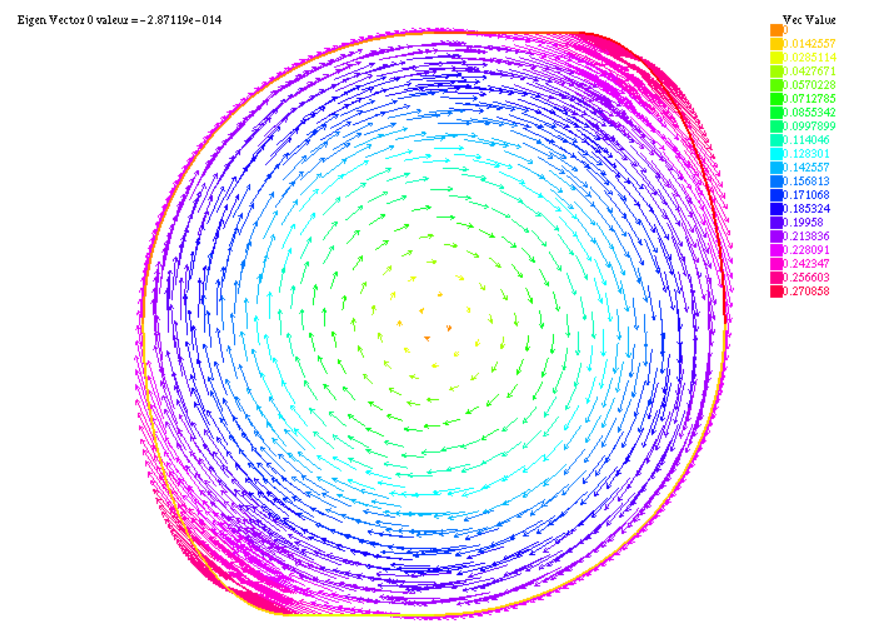}
    \caption{Representation of Killing fields on the Enneper's surface.}
    \label{fig:5}
\end{figure}

\subsection{Torus}
\label{sec7.2}
The flat torus is isothermal with $\lambda=0$ so that in this case the Killing fields are $u=b_1\partial_{x_1}+b_2\partial_{x_2}$. Hence in particular globally the space of Killing fields can be two dimensional although locally this is impossible by Lemma \ref{number_KF}. In this case one would also obtain exact solutions up to the rounding errors for the same reason as in the case of Enneper's surface.

Let us then consider the "standard" torus, with its Riemannian metric defined by the embedding in $\mathbb{R}^3$. This is a surface of revolution and as a profile curve we can choose
\[
c(x_1)=\big(2+\cos(x_1),\sin(x_1)\big)
\]
The corresponding metric  is given by
\begin{equation*}
g= dx_1 \otimes dx_1 + (2+ \cos(x_1))^2 dx_2\otimes dx_2
\end{equation*} 
By Lemma \ref{killing-rev} $u=\partial_{x_2}$ is a Killing field and by formula \ref{conf-ratk}
\[
    v=Ku=K\partial_{x_2}=\big(2+\cos(x_1)\big)\partial_{x_1} 
\]
is a conformal Killing field.

Note that a priori on a general surface of revolution there could be also other conformal Killing fields, but in  this particular case one can check that there are in fact no other conformal Killing fields. 

Our coordinate domain is thus the square  $[0,2\pi]\times [0,2\pi]$, with the boundaries appropriately identified. A representative solution for the conformal case, computed with around 2000 triangles, is shown in Figure \ref{fig:6}. The eigenspace corresponding to the zero eigenvalue is thus two dimensional, and it is spanned by a Killing field and a conformal Killing field which is not Killing. Numerically of course we have two eigenvalues very close to zero and each other.  Quantitative results are given below.


\begin{figure}[!t]
    \centering
    \includegraphics[width=1\textwidth]{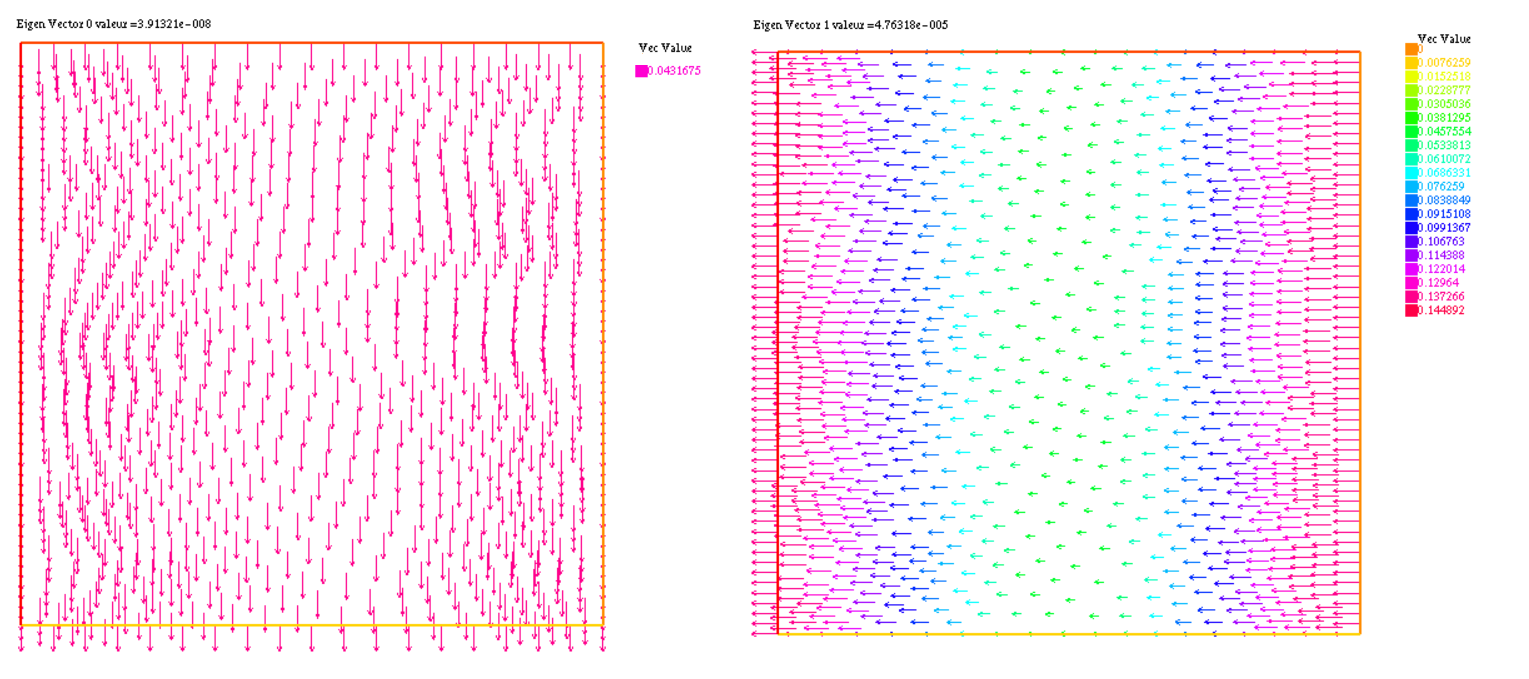}
     \caption{Approximation of a Killing field (on the left) and a conformal Killing field (on the right) on the standard torus.}
     \label{fig:6}
\end{figure}

\subsection{Klein bottle}
\label{sec6.3}
Let us finally consider the Klein bottle to see that our method works also on nonorientable surfaces. Klein bottle can be embedded in $\mathbb{R}^4$ and one popular parametrization is
\begin{equation}
      \varphi(x)=\begin{pmatrix}
 (2+\cos(x_1))\cos(x_2) \\
 (2+\cos(x_1))\sin(x_2) \\
\sin(x_1)\cos(x_2/2)\\
\sin(x_1)  \sin(x_2/2)
 \end{pmatrix}
 \label{klein-para}
\end{equation}

The parameter domain is again $[0,2\pi]\times [0,2\pi]$ and the sides are identified as in Figure   \ref{klein-bottle} on the left. The metric is
\[
   g=dx_1\otimes dx_1+\tfrac{1}{4}\big( 3\cos^2(x_1)+16\cos(x_1)+17\big)dx_2\otimes dx_2
\]
Locally this is like a surface of revolution which looks like (i.e. is isometric to) the surface shown in Figure   
 \ref{klein-bottle} on the right.
 
 Let $a=3\cos^2(x_1)+16\cos(x_1)+17$ and $b=\sin(x_1)(3\cos(x_1)+8)$. The Killing equations are now 
 \begin{align*}
  & u^1_{,1}=0\\
   &   4u^1_{,2}+a\,u^2_{,1}=0\\
   & a\,u^2_{,2}-b\,u^1=0
\end{align*}
and it is straightforward to check that $u=\partial_{x_2}$ is a solution. Then by Lemma \ref{conf-killing-rot}  $v=Ku=-\frac{\sqrt{3\cos(x)^2+16\cos(x)+17}}{2}\partial_{x_1}$ is a conformal Killing field. 

\begin{figure}
    \begin{center}
    \begin{tabular}{cc}
    \includegraphics[width=40mm]{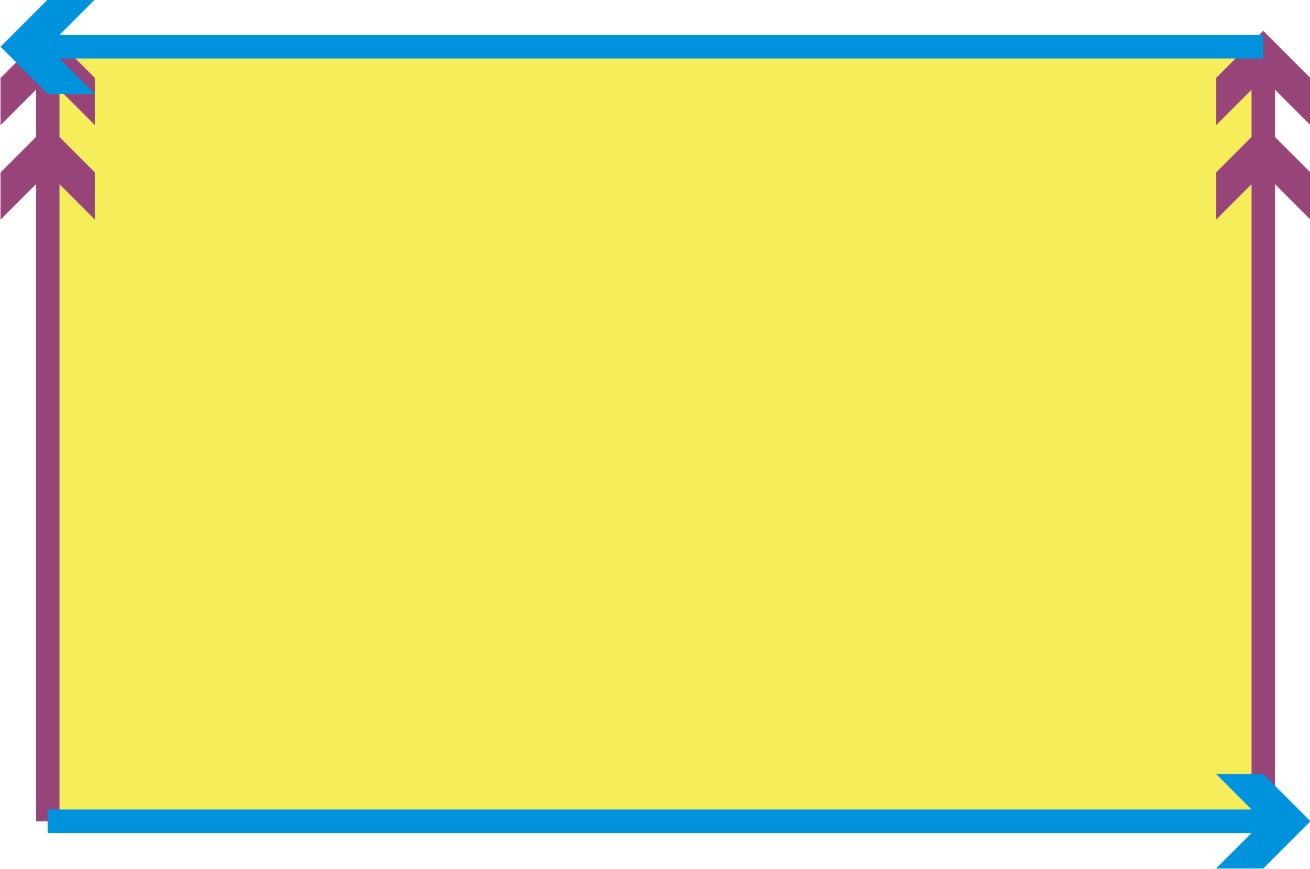}&
      \includegraphics[width=50mm]{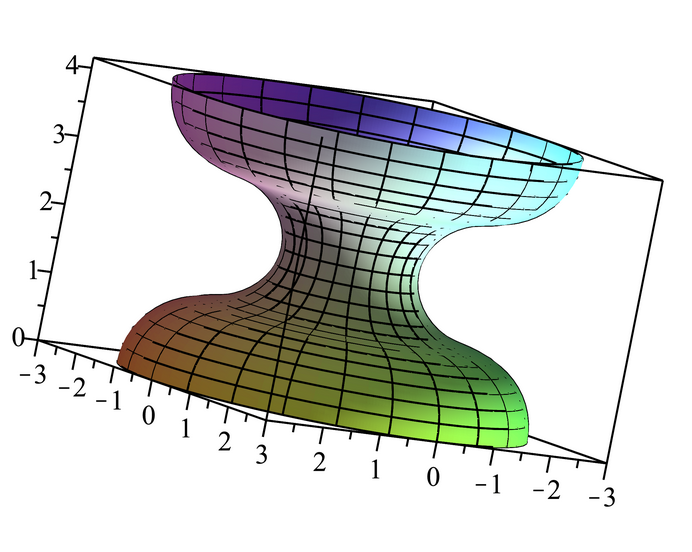}
      \end{tabular}
     \caption{Topological Klein bottle on the left and a surface of revolution which is locally  isometric to the Klein bottle with the embedding \eqref{klein-para}.}
     \end{center}
     \label{klein-bottle}
\end{figure}

The numerical results are discussed below.

\subsection{Numerical errors}

As explained above the case of Enneper's surface is rather special so let us here consider only the torus and the Klein bottle in more detail. We used $P_1$ elements for the Klein bottle and $P_2$ elements for the torus. 
In both cases we computed the solutions to problems (K) and (CK) with several triangulations. As we can see in tables \ref{table4} and \ref{table5}, even with few triangles (around 100), we have already quite  a  good approximation.  
In the conformal Killing case the eigenspace corresponding to zero eigenvalue is two dimensional. Numerically we have two eigenvalues very close to zero. Note that the approximation to the Killing field is much better than the approximation to the conformal Killing field. This is  because the components of the Killing field are simply constants so that the approximation error is zero and we see only the error arising in the numerical integration.

For completeness we computed the order of convergence in a standard way, i.e. we computed $k$ such that
\[
\varepsilon \approx Ch^k
\]
where $h$ is the maximum length of the triangulated domain and $\varepsilon $ is the error.
With 40 different triangulations for the example of the standard torus (\ref{sec7.2}), results are presented in the table \ref{table0}.
Results for the conformal Killing field (which are not Killing) are close to what one expects of  $P_2$ elements. For the Killing fields the convergence is faster because we see only the error due to numerical integration. The results are quite similar in the case of the Klein bottle. The convergence for conformal Killing fields are what one expects of $P_1$ elements, and again for Killing fields the order of the convergence is related to the order of numerical integration.

\begin{table}[!t]
\begin{center}
\begin{tabular}{|c|c|c|c|}
\hline
KF & $\varepsilon=|\lambda_h - \lambda|$ &  $\varepsilon=\|u_h - u\|_{L^2}$ & $\varepsilon=\|u_h - u\|_{H^1}$\\ 
\hline
k & 8.8 & 6.53 & 5.45 \\
\hline
CKF & $\varepsilon=|\lambda_h - \lambda|$ &  $\varepsilon=\|u_h - u\|_{L^2}$ & $\varepsilon=\|u_h - u\|_{H^1}$\\ 
\hline
k & 4.25 & 3.81 & 2.67 \\
\hline
\end{tabular}
\caption{The estimated order of convergence.}\label{table0}
\end{center}
\end{table}

\begin{table}[!t]
\begin{center}
\begin{tabular}{||c||c|c||}
\hline
 & \multicolumn{2}{c||}{\textbf{Torus}}\\
\hline
\textbf{100 triangles} & \textit{With adaptation} & \textit{Without adaptation} \\
\hline
Eigenvalue & $10^{-7}$ & $10^{-6}$ \\
\hline
$L^2$ norm of error  & $10^{-8}$ & $10^{-5}$ \\
\hline
$H^1$ norm of error  & $10^{-7}$ & $10^{-4}$  \\
\hline
\textbf{2 000 triangles} & \textit{With adaptation} & \textit{Without adaptation}  \\
\hline
Eigenvalue  & $10^{-10}$ & $10^{-7}$ \\
\hline
$L^2$ norm of error  & $10^{-14}$ & $10^{-10}$ \\
\hline
$H^1$ norm of error  & $10^{-14}$ & $10^{-8}$ \\
\hline
 & \multicolumn{2}{c||}{\textbf{Klein}}\\
\hline
\textbf{100 triangles} & \textit{With adaptation} & \textit{Without adaptation} \\
\hline
Eigenvalue & $10^{-7}$ & $10^{-4}$ \\
\hline
$L^2$ norm of error  & $10^{-6}$ & $10^{-4}$ \\
\hline
$H^1$ norm of error  & $10^{-5}$ & $10^{-3}$  \\
\hline
\textbf{2 000 triangles} & \textit{With adaptation} & \textit{Without adaptation}  \\
\hline
Eigenvalue  & $10^{-8}$ & $10^{-4}$ \\
\hline
$L^2$ norm of error  & $10^{-8}$ & $10^{-6}$ \\
\hline
$H^1$ norm of error  & $10^{-7}$ & $10^{-5}$ \\
\hline
\end{tabular}
\caption{Computations of Killing fields for the standard torus and Klein Bottle}
\label{table4}
\end{center}
\end{table}

\begin{table}[!t]
\begin{center}
\begin{tabular}{||c||c|c||}
\hline
 & \multicolumn{2}{c||}{\textbf{Torus}}\\
\hline
\textbf{100 triangles} & \textit{With adaptation} & \textit{Without adaptation}  \\
\hline
Eigenvalue & $10^{-4}$   & $10^{-3}$\\
\hline
$L^2$ norm of error  & $10^{-6}$ & $10^{-4}$ \\
\hline
$H^1$ norm of error  & $10^{-5}$ & $10^{-3}$  \\
\hline
\textbf{2 000 triangles} & \textit{With adaptation} & \textit{Without adaptation}  \\
\hline
Eigenvalue  & $10^{-7}$ & $10^{-4}$\\
\hline
$L^2$ norm of error  & $10^{-11}$ & $10^{-6}$ \\
\hline
$H^1$ norm of error  & $10^{-8}$ & $10^{-5}$ \\
\hline
& \multicolumn{2}{c||}{\textbf{Klein}}\\
\hline
\textbf{100 triangles} & \textit{With adaptation} & \textit{Without adaptation}  \\
\hline
Eigenvalue & $10^{-3}$   & $10^{-2}$\\
\hline
$L^2$ norm of error  & $10^{-3}$ & $10^{-2}$ \\
\hline
$H^1$ norm of error  & $10^{-2}$ & $10^{-1}$  \\
\hline
\textbf{2 000 triangles} & \textit{With adaptation} & \textit{Without adaptation}  \\
\hline
Eigenvalue  & $10^{-7}$ & $10^{-5}$\\
\hline
$L^2$ norm of error  & $10^{-7}$ & $10^{-3}$ \\
\hline
$H^1$ norm of error  & $10^{-6}$ & $10^{-3}$ \\
\hline
\end{tabular}
\caption{Computations of Conformal Killing fields which is not Killing for the standard torus and Klein Bottle}
\label{table5}
\end{center}
\end{table}

\begin{figure}[!t]
    \centering
    \includegraphics[width=0.8\textwidth]{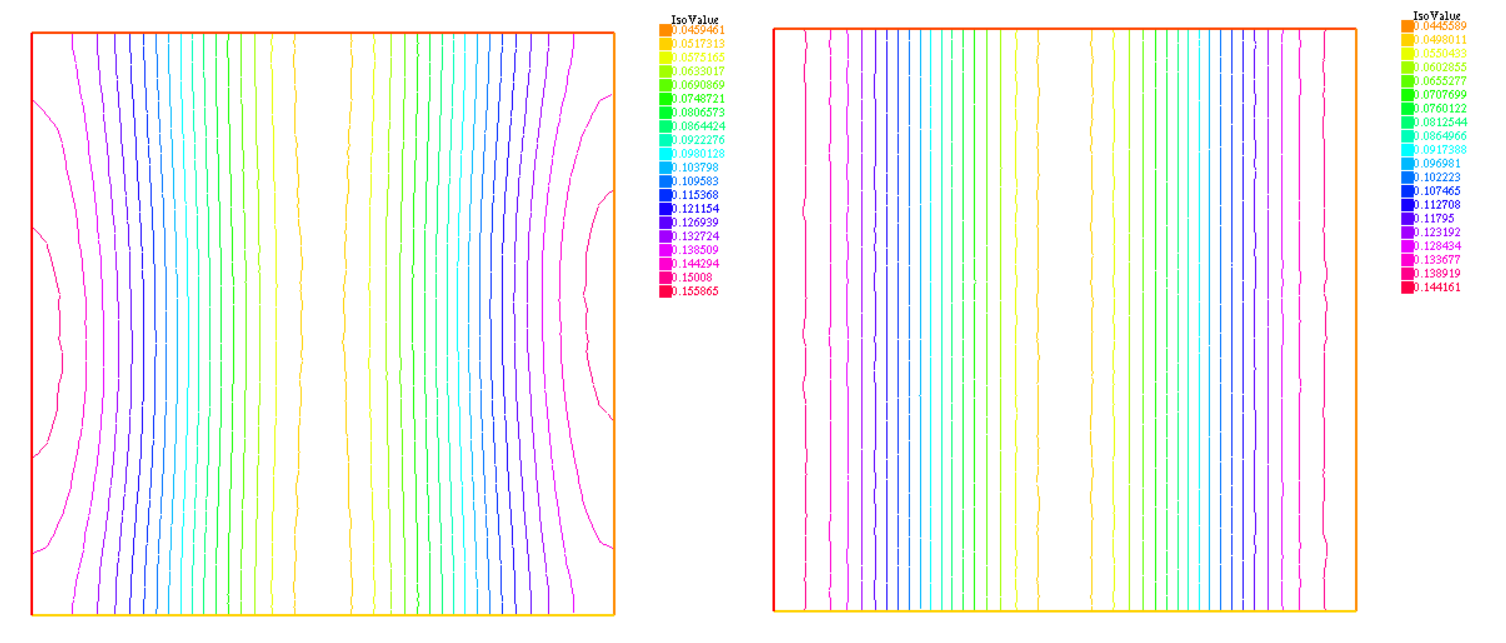}
     \caption{Component $u^1$ of a conformal Killing field which is not Killing on the Klein Bottle without (on the left) and with adaptation (on the right) of the metric.}
     \label{fig:7}
\end{figure}

The adaptation of the metric is important for computations as shown in Figure \ref{fig:7}. It represents the first component of the conformal Killing field which is not Killing on the Klein bottle. On the left, the domain is triangulated without adaptation, and it shows that the solution is deformed. That is not the case with the same number of triangles using an adapted mesh (on the right). It implies that $L^2$ and $H^1$ errors can increase significantly without an adapted mesh (see tables \ref{table4} and \ref{table5}). 

\printbibliography

\end{document}